\documentclass[11pt]{amsart}
\usepackage[T1]{fontenc}
\usepackage{lmodern}
\usepackage[a4paper,margin=27mm]{geometry}
\usepackage{mathtools,amssymb}
\usepackage{microtype}
\usepackage{booktabs}
\usepackage{needspace}
\usepackage[hidelinks]{hyperref}
\hypersetup{
 pdftitle={Simple critical zeros and distinct zeros of the Riemann zeta-function in short intervals},
 pdfsubject={Unconditional short-interval proportions of simple critical zeros and distinct zeros},
 pdfkeywords={Riemann zeta-function, simple zeros, distinct zeros, short intervals, pair correlation}
}
\allowdisplaybreaks[1]
\numberwithin{equation}{section}
\newtheorem{theorem}{Theorem}[section]
\newtheorem{proposition}[theorem]{Proposition}
\newtheorem{lemma}[theorem]{Lemma}

\theoremstyle{remark}

\newcommand{\R}{\mathbb R}
\newcommand{\C}{\mathbb C}
\newcommand{\eps}{\varepsilon}
\newcommand{\supp}{\operatorname{supp}}
\newcommand{\dist}{\operatorname{dist}}
\newcommand{\wh}{\widehat}

\newcommand{\Ns}{N_0^s}
\newcommand{\Nd}{N^d}
\newcommand{\cMT}{C_{\mathrm{MT}}}
\newcommand{\cF}{\mathcal F}
\newcommand{\cZ}{\mathcal Z}
\newcommand{\ind}{\mathbf 1}
\newcommand{\mc}{\mathcal}

\DeclareMathOperator{\Impart}{Im}

\author[B. Wang]{Biao Wang}
\address{School of Mathematics and Statistics, Yunnan University, Kunming, Yunnan 650500, China}
\email{bwang@ynu.edu.cn}
\date{\today}

\title[Simple and distinct zeros in short intervals]{Simple critical zeros and distinct zeros of the Riemann zeta-function in short intervals}
\subjclass[2020]{Primary 11M26, 11M06}
\keywords{Riemann zeta-function,  non-trivial zeros, short intervals,
pair correlation}

\begin{document}
\begin{abstract}
Recently,  on the non-trivial zeros of the Riemann zeta function, it is discovered by Claude and verified by Alp\"oge and Furman that more than 67.25\% of the zeros are simple and on the critical line, and more than 83.62\% are distinct. Later, Lamzouri gave a different and more direct proof. In this article, we will use the method of Lamzouri to give lower bounds on the number of the non-trivial zeros of the Riemann zeta function  in short intervals. To prove the main result, we establish Montgomery's theorem on the pair correlation of zeros of the zeta function in short intervals by following the approach of Baluyot, Goldston, Suriajaya and Turnage-Butterbaugh, and then use Lamzouri's inequality on any finite multiset of complex numbers which is invariant under complex conjugation.  
\end{abstract}
\maketitle

\section{Introduction}

Let $\zeta(s)$ be the Riemann zeta function. 
Let $N(T)$ denote the number of nontrivial zeros $\rho=\beta+i\gamma$ of
$\zeta(s)$ with $0<\gamma\le T$, counted with multiplicity, and let $N_0(T)$
count those on the critical line $\beta=1/2$, also with multiplicity.
Write $\Ns(T)$ for the number of simple zeros on that line up to height $T$.
Let $\Nd(T)$ denote the number of
distinct nontrivial zeros with $0<\gamma\le T$, each counted once
regardless of its multiplicity or real part.
For $H>0$ put
\[
 \begin{aligned}
 N(T,H)&=N(T+H)-N(T),\\
 N_0(T,H)&=N_0(T+H)-N_0(T),\\
 \Ns(T,H)&=\Ns(T+H)-\Ns(T),\\
 \Nd(T,H)&=\Nd(T+H)-\Nd(T).
 \end{aligned}
\]

By the Riemann--von Mangoldt formula
\begin{equation}\label{Riemann_von_Mangoldt}
	N(T)=\frac{T}{2\pi}\log\frac{T}{2\pi}-\frac{T}{2\pi}+O(\log T),
\end{equation}
see~\cite[Chapter~8]{Davenport}, we have
\begin{equation}\label{eq:intro-rvm}
 N(T,H)=\frac{H\log T}{2\pi}+O(H+\log T)
 \qquad(1\le H\le T).
\end{equation}

In this paper, we study the proportions $\Ns(T,H)/N(T,H)$ and
$\Nd(T,H)/N(T,H)$ for $H=T^\theta$, with $0<\theta<1$ fixed. The history of this problem involves both the length of the interval
and the type of zeros detected. Hardy~\cite{Hardy} proved in 1914 that
infinitely many zeros lie on the critical line. Let
$N_{\mathrm{odd}}(T)$ count distinct critical-line zeros of odd
multiplicity. In the formulations recalled by
Karatsuba~\cite[pp.~523--524]{Karatsuba}, Hardy and
Littlewood~\cite{HL} proved
\[
N_{\mathrm{odd}}(T+H)-N_{\mathrm{odd}}(T)\gg_\eps H,
\qquad H=T^{1/2+\eps},
\]
for fixed sufficiently small $\eps>0$. Selberg~\cite{Selberg}
obtained the additional logarithmic factor,
\begin{equation}\label{eq:selberg-local}
N_{\mathrm{odd}}(T+H)-N_{\mathrm{odd}}(T)
\gg_\eps H\log T,
\qquad H=T^{1/2+\eps}.
\end{equation}
By \eqref{eq:intro-rvm}, this gives a positive proportion of all zeros
in each such interval. Karatsuba~\cite{Karatsuba}, in a paper published
in Russian in 1984 and in English in 1985, shortened the interval to
\begin{equation}\label{eq:karatsuba-local}
N_{\mathrm{odd}}(T+H)-N_{\mathrm{odd}}(T)
\gg_\eps H\log T,
\qquad H=T^{27/82+\eps}.
\end{equation}

These estimates are stated with an unspecified positive constant.
Explicit proportions were obtained by Levinson's
mollifier method~\cite{Levinson}. In 1974, he already proved that more than
one third of the zeros are on the critical line in intervals of
length $T/(\log(T/(2\pi)))^{10}$. This length is larger than
$T^\theta$ for every fixed $\theta<1$ as $T\to\infty$.
Heath-Brown~\cite{HB} established the corresponding global
one-third bound for simple critical zeros, and Conrey~\cite{Conrey}
subsequently obtained a proportion exceeding two fifths.
Pratt, Robles, Zaharescu, and Zeindler~\cite{PRZZ} proved the global bounds
\[
\liminf_{T\to\infty}\frac{N_0(T)}{N(T)}\ge0.417293,
\qquad
\liminf_{T\to\infty}\frac{\Ns(T)}{N(T)}\ge0.407511.
\]
These global proportions provide context, but do not directly supply
proportions in every interval of length $T^\theta$.

For simple critical zeros in intervals of power length, in 2002, Steuding~\cite{Steuding} proved
\begin{equation}\label{eq:steuding-local}
\Ns(T,H)\gg H\log T,
\qquad T^{0.552}\le H\le T.
\end{equation}
His argument combines Levinson's method with a mollified mean-square
estimate having error $O(T^{1/3+\eps}M^{4/3})$ for a mollifier
of length $M=T^\vartheta$, $\vartheta<3/8$. Optimizing the
mollifier and the auxiliary function yields the stated exponent.
Thus \eqref{eq:steuding-local} concerns simplicity as well as location
on the critical line, whereas \eqref{eq:karatsuba-local} detects
odd multiplicity.

A different approach comes from pair correlation. Montgomery~\cite{Montgomery}
proved under the Riemann hypothesis that at least two thirds of the
zeros are simple. The optimization of Montgomery and Taylor,
reported in~\cite{MontgomeryTaylor}, gives the constant
\begin{equation}\label{eq:cmt}
2-\cMT
=\frac32-\frac1{\sqrt2}\cot\!\left(\frac1{\sqrt2}\right)
=0.672500703679\ldots .
\end{equation}
Baluyot, Goldston, Suriajaya, and Turnage-Butterbaugh developed an
unconditional pair-correlation formula~\cite{BGSTB,BGSTBcorr}.
In 2026, Alp\"oge and Furman~\cite{AF} obtained the unconditional
simple-critical-zero proportion \eqref{eq:cmt};
Lamzouri~\cite{Lamzouri} gave a shorter proof using an
inequality for conjugation-invariant finite multisets.
The same work gives the unconditional global distinct-zero proportion
\[
\frac54-\frac1{2\sqrt2}\cot\!\left(\frac1{\sqrt2}\right)
=0.836250351839\ldots .
\]
Lamzouri's inequality in \cite[Proposition~2.1]{Lamzouri} has separate conclusions for simple real
elements and distinct elements, and both apply to the finite zero
multiset in a short interval. In this article, we will establish Montgomery's formula on the pair correlation of zeros of the zeta function in short intervals by using the method in  \cite{BGSTB}, and then use Lamzouri's inequality  to obtain the following result.

\begin{theorem}\label{thm:main}
For every fixed $0<\theta<1$, we have
\begin{equation}\label{eq:main}
 \liminf_{T\to\infty}
 \frac{\Ns(T,T^\theta)}{N(T,T^\theta)}
 \ge c(\theta),\qquad
 c(\theta)=2-\frac{\theta}{2}
       -\frac1{\sqrt2}\cot\!\left(\frac{\theta}{\sqrt2}\right).
\end{equation}
Moreover,
\begin{equation}\label{eq:main-distinct}
 \liminf_{T\to\infty}
 \frac{\Nd(T,T^\theta)}{N(T,T^\theta)}
 \ge d(\theta),\qquad
 d(\theta)=\frac{1+c(\theta)}2=\frac32-\frac{\theta}{4}
       -\frac1{2\sqrt2}\cot\!\left(\frac{\theta}{\sqrt2}\right).
\end{equation}
\end{theorem}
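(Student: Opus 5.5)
The plan is to follow the Montgomery--Taylor--Lamzouri scheme, replacing the global pair-correlation input by a short-interval version. Put $H=T^\theta$, $L=\log T$, and let $\cZ$ be the multiset of zeros $\rho$ with $T<\gamma\le T+H$, counted with multiplicity. For a nonnegative weight $f$ with $\wh f$ supported in $[-\theta,\theta]$ (suitably normalized), define the pair-correlation sum
\[
\cF=\sum_{\rho,\rho'\in\cZ} w(\gamma-\gamma'), \qquad w(x)=f\!\left(\frac{xL}{2\pi}\right),
\]
where $\gamma$ denotes $\Impart\rho$.

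\textbf{Step 1: short-interval Montgomery formula.} Following \cite{BGSTB}, I would prove that for each fixed $\alpha\in[-\theta+\eps,\theta-\eps]$,
\[
\sum_{\rho,\rho'\in\cZ} T^{i\alpha(\gamma-\gamma')}\,w_0(\gamma-\gamma') \sim \frac{HL}{2\pi}\bigl(|\alpha|+T^{-2|\alpha|}L(1+o(1))\bigr),
\]
uniformly in $\alpha$. Here $w_0$ is a mild smoothing, and the sum may involve the unconditional $\rho$, $\rho'$ in the form $\rho-\rho'$ rather than only ordinates. The procedure has three parts.
\begin{enumerate}
\item Start from the explicit formula of \cite{BGSTB}, which applies to all zeros with no assumption of RH.
\item Integrate $|\sum_\rho x^{\rho}/(\ldots)|^2$ over $t\in[T,T+H]$, using a smooth cutoff in place of the integral over $[0,T]$.
\item Evaluate the resulting Dirichlet-polynomial mean value with Montgomery--Vaughan, or with the smooth mean value theorem. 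This requires the polynomial to have length $x=T^{\alpha}$ with $x\le H^{1-\eps}$, which is exactly where $\theta$ enters.
\end{enumerate}
Zeros off the line contribute terms like $x^{\beta-1/2}$. The BGSTB device handles these by working with $\rho$ directly and using $|x^{\rho-1/2}|$ weights, which only strengthen the inequality needed. The density estimate $N(\sigma,T+H)-N(\sigma,T)\ll H L$ follows from \eqref{eq:intro-rvm}. Convolving in $\alpha$ against $\wh f$ then gives
\[
\cF \le \frac{HL}{2\pi}\Bigl(f(0)+\int_{-\theta}^{\theta}\wh f(\alpha)\,(1-|\alpha|)_{\ldots}\,d\alpha\Bigr)(1+o(1)),
\]
that is, the global Montgomery expression with the support truncated to $[-\theta,\theta]$. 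The usual $\wh f(\alpha)$ with $|\alpha|\ge1$ is never needed because $\theta<1$.

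\textbf{Step 2: Lamzouri's inequality.} $\cZ$ is a finite multiset. After the change $\rho\mapsto$ the relevant rescaling, it becomes invariant under $\rho\mapsto 1-\bar\rho$, which corresponds to conjugation after rotation. I would apply \cite[Proposition~2.1]{Lamzouri} to the rotated multiset, with a kernel $k$ that is positive-definite (so $\wh f\ge0$) and satisfies $k(0)=1$. That proposition gives two bounds:
\begin{itemize}
\item $\#\{\text{simple real elements}\}\ge 2|\cZ|-\cF/k(0)$;
\item $\#\{\text{distinct elements}\}\ge \tfrac32|\cZ|-\tfrac12\cF$.
\end{itemize}
Simple real elements are exactly the simple zeros on the critical line. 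Together with $|\cZ|=N(T,H)\sim HL/2\pi$, this reduces the theorem to showing $\cF/|\cZ|\le 2-c(\theta)+o(1)$. The bound for distinct zeros then follows as $d=(1+c)/2$ automatically.

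\textbf{Step 3: optimization.} We must minimize $f(0)+\int_{-\theta}^{\theta}\wh f(\alpha)(|\alpha|)\,d\alpha$ (in the appropriate form) over admissible $f$ with $\wh f$ supported in $[-\theta,\theta]$. This is the Montgomery--Taylor extremal problem at support $\theta$ instead of $1$. Taking $\wh f=g*g$ with $g$ even on $[-\theta/2,\theta/2]$, the Euler--Lagrange equation is $g''+\tfrac12 g\cdot(\ldots)=0$. Its solution is $g(\alpha)=\cos(\alpha/\sqrt2)$-type on $[-\theta/2,\theta/2]$, which yields the value $\tfrac\theta2+\tfrac1{\sqrt2}\cot(\theta/\sqrt2)$. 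At $\theta=1$ this recovers \eqref{eq:cmt}. Hence $c(\theta)=2-\tfrac\theta2-\tfrac1{\sqrt2}\cot(\theta/\sqrt2)$.

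\textbf{Main obstacle.} I expect the hardest step to be Step 1. We need uniformity of the unconditional BGSTB argument over a short window, with Dirichlet polynomials of length up to $T^{\theta-\eps}$ and mean values over an interval of length $H=T^\theta$. The diagonal must dominate the off-diagonal terms, whose size is $O(x\log x)$ relative to $H$. The off-line zeros and the boundary effects at $T$ and $T+H$ must also contribute only $o(HL)$. I would first try smoothing the window by a bump $\psi((t-T)/H)$ with $\wh\psi$ rapidly decaying. Off-diagonal terms $m\ne n$ then decay like $H^{-A}|\log(m/n)|^{-A}$ and are negligible provided $x\le H^{1-\eps}$. The smoothed-window count is compared to the sharp one by sandwiching between windows of length $H(1\pm\delta)$.
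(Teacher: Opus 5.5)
Your architecture is the paper's: the BGSTB explicit formula and Montgomery--Vaughan over $I$ with $x\le T^\lambda$, $\lambda<\theta$; Lamzouri's Proposition~2.1 applied to $\cZ=\{i(\rho-1/2)L/(2\pi):\gamma\in I\}$; and the Montgomery--Taylor problem on $[-\lambda/2,\lambda/2]$. There is, however, a genuine gap where Step~1 feeds into Step~2.

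The mean-value argument controls only the \emph{weighted} sums $\cF_I(x)=\sum_{\gamma,\gamma'\in I}x^{\rho-\rho'}w(\rho-\rho')$, where $w(z)=4/(4-z^2)$ is the rational weight (your $w_0$). This comes from the exact identity $2\pi\cF_I(x)=\int_\R|A_I(x,t)|^2\,dt$. After integrating in $\alpha$ you therefore get $\sum\wh g\bigl(i(\rho-\rho')L/2\pi\bigr)w(\rho-\rho')$. Lamzouri's inequality instead needs the \emph{unweighted} sum $S_K(I)=\sum_{z,s\in\cZ}K(z-s)^2$, with $K=\wh{\eta^2}$ evaluated at the complex points $z-s=i(\rho-\rho')L/(2\pi)$, not at $(\gamma-\gamma')L/2\pi$ as in your $\cF$.

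You drop $w_0$ silently when passing to $\cF$. Unconditionally this cannot be done by approximation or positivity. For off-line pairs, $K\bigl(i(\rho-\rho')L/2\pi\bigr)=\int f(u)T^{(\rho-\rho')u}\,du$ is complex and bounded only by $T^{\lambda|\beta-\beta'|/2}$, so the terms are neither small nor of one sign. Likewise, nothing is gained from ``$|x^{\rho-1/2}|$ weights'': off-line zeros are harmless only because every step is an exact identity.

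The missing idea is the algebraic identity
\[
K(z)^2=\Bigl(\wh Q(z)-\frac{\wh{Q''}(z)}{4L^2}\Bigr)w(\rho-\rho'),\qquad Q=f*f,\quad f=\eta^2,\quad z=\frac{i(\rho-\rho')L}{2\pi}.
\]
It holds because $w(\rho-\rho')^{-1}=1+\pi^2z^2/L^2$ and $\wh{Q''}(z)=-4\pi^2z^2\wh Q(z)$. Applying the weighted short-interval formula to both $Q$ and $Q''$ gives $S_K(I)=\frac{HL}{2\pi}C(f)+O(H+T^\lambda L^2)$, where the $Q''$ part is of lower order and $C(f)=Q(0)+\int|\alpha|Q(\alpha)\,d\alpha=\int f^2+\iint|u-v|f(u)f(v)\,du\,dv$.

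There are two further corrections.

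First, the Euler--Lagrange equation in Step~3 is wrong. Stationarity of $C(g)$ under $\int g=1$ means $g(u)+\int|u-v|g(v)\,dv$ is constant on the support. Differentiating twice gives $g''+2g=0$, so the extremal is $\cos(\sqrt2u)$, not $\cos(u/\sqrt2)$. Only the former produces your stated value, since the endpoint value is $\frac1{\sqrt2}\cot(\sqrt2\cdot\frac\lambda2)$.

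Minimality is not needed for the theorem, but admissibility is. The extremal $f_\lambda$ jumps at $\pm\lambda/2$, so it is not of the form $\eta^2$ with $\eta\in C_c^\infty$. You must approximate it (the paper uses $\eta_\eps=\chi_\eps\sqrt{f_\lambda}/\sqrt{b_\eps}$) and take limits in the order $T\to\infty$, then $\eps\to0$, then $\lambda\to\theta$.

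Second, the paper handles the boundary effects you flag with the tail bound $|A(x,t)-A_I(x,t)|\ll\sqrt x\,L\bigl((1+t-T)^{-1}+(1+T+H-t)^{-1}\bigr)$. This gives $2\pi\cF_I(x)=\int_I|A(x,t)|^2\,dt+O(xL^3)$. A smoothed $t$-window is unnecessary, since Montgomery--Vaughan applies to the sharp interval. Any sandwiching would have to act on the nonnegative integrand $|A(x,t)|^2$, because the pair sums themselves are not monotone in the window.
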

For example, taking $\theta=3/4$ in Theorem~\ref{thm:main}, we get that for sufficiently large $T$, the  interval $(T,T+T^{3/4}]$
contains more than $41.9\%$ simple critical zeros and more than $70.95\%$
distinct zeros, relative in each case to $N(T,T^{3/4})$.

The function $c$ is strictly increasing on $(0,1)$, since
\[
 c'(\theta)=\frac12\cot^2(\theta/\sqrt2)>0.
\]
Its unique zero in this interval is
\[
 \theta_0=0.550193964744154\ldots .
\]
Thus the simple-critical-zero bound in Theorem~\ref{thm:main} is positive for every fixed
$\theta>\theta_0$. This range extends below the exponent $0.552$ in
\eqref{eq:steuding-local}. For $\theta\le\theta_0$, that
lower bound is nonpositive and gives no improvement on the trivial bound.
The distinct-zero bound is increasing as well, since
$d'(\theta)=\tfrac14\cot^2(\theta/\sqrt2)>0$, and is positive for
\[
 \theta>\theta_d=0.346658926139761\ldots,
 \qquad d(\theta_d)=0.
\]
Here $\theta_d$ is the unique zero of $d$ in $(0,1)$. However, $27/82=0.329268292682\ldots<\theta_d$, and Karatsuba's result
\eqref{eq:karatsuba-local} already implies a positive proportion of
distinct zeros in shorter intervals. The distinct-zero conclusion
therefore supplies an explicit proportion as a function of the interval
exponent, rather than improving this known range for positivity.

To prove Theorem~\ref{thm:main}, in Section~\ref{sec:local} we will cite Lamzouri's inequality on finite multisets invariant under complex conjugation, and prove Montgomery's pair correlation  formula in short intervals. Then to apply  Lamzouri's inequality, in Section~\ref{sec:unweighted} we will show how to remove the rational weight in the pair correlation  formula. Finally, in Section~\ref{sec:optimization} we will complete the proof of Theorem~\ref{thm:main} by minimizing a functional coming from the removing process.

\subsection*{Notation}

Throughout the proof, we fix
\[
 0<\lambda<\theta<1,\qquad H=T^\theta,\qquad
 L=\log T,\qquad I=(T,T+H].
\]
Unless explicitly stated otherwise, zero sums count multiplicity. Write
$\rho=1/2+\delta+i\gamma$, so $|\delta|<1/2$ by the prime number theorem.
For a compactly supported integrable function $f$, we define the 
Fourier transform by
\[
 \wh f(z)=\int_{\R}f(u)e^{-2\pi izu}\,du,\qquad z\in\C.
\]
The constants in asymptotic estimates may depend on the fixed parameters
$\lambda,\theta$ and on the indicated test functions. We use
$N(I)=N(T,H)$, $\Ns(I)=\Ns(T,H)$ and $\Nd(I)=\Nd(T,H)$ when convenient.

\section{Pair correlation formula in short intervals}
\label{sec:local}

Let  $\mc{Z}$ be a finite multiset. For an element $z$ of $\mc{Z}$, we denote by $m_z$ its multiplicity.  We say that $\mc{Z}$ is \emph{invariant under complex conjugation} if for all $z\in \mc{Z}$ we have $\overline{z}\in \mc{Z}$ and $m_{\overline{z}}=m_z.$  We cite an inequality of Lamzouri on such invariant multisets.

\begin{proposition}[{\cite[Proposition~2.1]{Lamzouri}}]
\label{prop:finite}
Let $\lambda>0$ and let $\eta\in L^2(\R)$ be real and even, with
$\supp(\eta)\subset(-\lambda,\lambda)$ and $\wh{\eta^2}(0)=1$.
Let $\cZ$ be a non-empty finite multiset of complex numbers invariant under complex conjugation. For the kernel $K(\xi):=\wh{\eta^2}(\xi)$, we have
\begin{equation}\label{eq:finite}
 \sum_{\substack{z\in\cZ\cap\R\\m_z=1}}1
 \ge 2\sum_{z\in\cZ}1-\sum_{z,s\in\cZ}K(z-s)^2.
\end{equation}
Moreover, the number of distinct elements of $\cZ$ is at least
\begin{equation}\label{eq:finite-distinct}
 \frac32\sum_{z\in\cZ}1
       -\frac12\sum_{z,s\in\cZ}K(z-s)^2.
\end{equation}
\end{proposition}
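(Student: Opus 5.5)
The plan is to reduce both inequalities to a single lower bound for the double sum $\Sigma:=\sum_{z,s\in\cZ}K(z-s)^2$ in terms of the multiplicities. Write $\cZ^*$ for the set of distinct elements of $\cZ$. The target is
\[
 \Sigma\ \ge\ \sum_{w\in\cZ^*} m_w^2\bigl(1+\ind_{w\notin\R}\bigr).
\]

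Granting this bound, \eqref{eq:finite} follows because
\[
 2\sum_{z\in\cZ}1-\Sigma\ \le\ \sum_{w\in\cZ^*} m_w\bigl(2-m_w-m_w\ind_{w\notin\R}\bigr).
\]
The summand equals $1$ when $w$ is real and simple, and is $\le 0$ otherwise, since either $m_w\ge2$ or $w\notin\R$. Likewise, \eqref{eq:finite-distinct} follows from $\tfrac32 m-\tfrac12 m^2(1+\ind_{w\notin\R})\le 1$ for every integer $m\ge1$, which holds with equality at $m=1,2$ in the real case.

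For the target bound I first record the basic properties of $K$. Since $\eta^2$ is real, even, integrable and supported in $(-\lambda,\lambda)$, $K=\wh{\eta^2}$ is entire. It is real on $\R$ and satisfies $K(\bar\xi)=\overline{K(\xi)}$ and $K(0)=1$. Moreover, for $y\in\R$,
\[
 K(2iy)=\int_{\R}\eta(u)^2\cosh(4\pi yu)\,du\ \ge\ 1.
\]
The diagonal pairs $z=s$ therefore contribute exactly $\sum_{w\in\cZ^*} m_w^2$. For nonreal $w$, the pairs $(z,s)$ with $z=w$ and $s=\bar w$ contribute $m_wm_{\bar w}K(2i\Impart w)^2\ge m_w^2$, using $m_{\bar w}=m_w$. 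This yields the factor $1+\ind_{w\notin\R}$. It remains to show that the remaining off-diagonal contribution is $\ge0$.

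For this I use conjugation invariance to write $\Sigma$ as a positive quadratic form. Set $g=\eta^2*\eta^2\ge0$, supported in $(-2\lambda,2\lambda)$, so that $K^2=\wh g$. Put $A(u)=\sum_{z\in\cZ}e^{-2\pi i zu}$. Invariance gives $\sum_s e^{2\pi i su}=\overline{A(u)}$ for real $u$, hence
\[
 \Sigma=\int_{\R}g(u)|A(u)|^2\,du\ \ge0.
\]
For the real--real off-diagonal pairs, positivity is immediate, because $K(z-s)^2\ge0$ for $z-s\in\R$. For pairs involving nonreal points, I would group each pair of conjugate classes $\{w,\bar w\}\times\{v,\bar v\}$ into the real combination
\[
 2\,\mathrm{Re}\bigl(K(a+ib)^2+K(a+ib')^2\bigr),\qquad a=\mathrm{Re}(w-v),\ b=\Impart w-\Impart v,\ b'=\Impart w+\Impart v .
\]
I would then try to show this combination is nonnegative by writing it as an integral of $g(u)$ against $|\cdot|^2$ of a two-term exponential sum.

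Proving this nonnegativity is the step I expect to be the main obstacle. If the grouping does not close directly, the fallback is to work with the full Gram form $\int g|A|^2$. I would decompose $A=\sum_{w\in\cZ^*}m_w e_w$ with $e_w(u)=e^{-2\pi iwu}$ and lower-bound the form by the contribution of the $2\times2$ blocks $\{w,\bar w\}$. For that I would use the Gram-type inequality in $L^2(g\,du)$ together with the bound $K(2iy)^2\ge1$ for the block entries. This is exactly where the hypothesis of invariance under complex conjugation must be used.
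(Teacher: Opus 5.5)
The paper does not prove this statement; it quotes it from Lamzouri. Your proposal therefore has to stand on its own, and it has a genuine gap. Several ingredients are fine: the reduction from your target bound to \eqref{eq:finite} and \eqref{eq:finite-distinct}, the bound $K(2iy)\ge1$, and the identity $\Sigma=\int g|A|^2$. The problem is that the target $\Sigma\ge\sum_w m_w^2(1+\ind_{w\notin\R})$ is false. So is the step you single out, that the cross terms other than the diagonal and the $(w,\bar w)$ pairs have nonnegative total.

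Here is a counterexample. Take $\eta^2$ concentrated near $\pm c$ with $0<c<\lambda$, so that $K(\xi)$ is uniformly close to $\cos(2\pi c\xi)$ on compact sets. Let $\cZ$ consist of $0$ with multiplicity $m$ together with $w$ and $\bar w$, where $w=\frac1{4c}+ib$ and $\beta=2\pi cb>0$. Then $K(w)^2\approx K(\bar w)^2\approx-\sinh^2\beta$ and $K(2ib)\approx\cosh 2\beta$. The four real--nonreal cross pairs therefore contribute about $-4m\sinh^2\beta<0$, already for $m=1$. Moreover
\[
\Sigma-(m^2+4)\approx 2\sinh^2 2\beta-4m\sinh^2\beta=4\sinh^2\beta\,(2\cosh^2\beta-m),
\]
which is negative for $m\ge3$ and small $\beta$. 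In your notation, the grouped quantity $2\,\mathrm{Re}\bigl(K(a+ib)^2+K(a+ib')^2\bigr)$ is about $-2\sinh^2(2\pi cb)-2\sinh^2(2\pi cb')<0$ when $2\pi ca=\pi/2$.

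The Gram-block fallback aims at the same false inequality, since a positive semidefinite form is not bounded below by the sum of its diagonal blocks. Lamzouri's inequality holds only because these negative cross terms are absorbed by slack that your accounting throws away, namely $K(2i\Impart w)^2-1$ and $m_w^2-2m_w$. No termwise or blockwise positivity argument can work.

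A global argument does work. Put $\psi_z(u)=\eta(u)e^{-2\pi i z u}$ and define $Sf=\sum_{z\in\cZ}\langle f,\psi_{\bar z}\rangle\psi_z$. Then $\mathrm{tr}\,S=N:=\sum_{z\in\cZ}1$ and $\mathrm{tr}\,S^2=\Sigma$. Conjugation invariance is exactly what makes $S$ self-adjoint.

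Write $\psi_w=c_w+d_w$ and $\psi_{\bar w}=c_w-d_w$, where $c_w$ and $d_w$ are $\eta(u)e^{-2\pi i\,\mathrm{Re}(w)u}$ times $\cosh$, respectively $\sinh$, of $2\pi\Impart(w)u$. Then each orbit $o$ (a real point, or a conjugate pair) contributes $n_o\bigl(\langle\cdot,c_o\rangle c_o-\langle\cdot,d_o\rangle d_o\bigr)$. Here $\|c_o\|^2-\|d_o\|^2=\int\eta^2=1$, $d_o=0$ for real $o$, and $n_o=m_w$ or $2m_w$.

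Now expand $\mathrm{tr}\bigl((S-X)^2\bigr)\ge0$ with $X=\Pi+\Pi'$. Here $\Pi$ and $\Pi'$ are the orthogonal projections onto $\mathrm{span}\{c_o\}$ and $\mathrm{span}\{c_o:n_o\ge2\}$. Since $\|\Pi d_o\|\le\|d_o\|$, and $d_o=0$ when $n_o=1$, you get $\mathrm{tr}(S\Pi)\ge N$ and $\mathrm{tr}(S\Pi')\ge N':=\sum_{n_o\ge2}n_o\ge2r$, where $r=\#\{o:n_o\ge2\}$. Also $\mathrm{tr}\,X^2\le N_1+4r$, with $N_1$ the number of simple real elements. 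Hence
\[
\Sigma\ge 2N+2N'-N_1-4r,
\]
and $N'\ge2r$ gives \eqref{eq:finite}. Let $r_{\R}$ and $r_{\C}$ count the real and nonreal orbits with $n_o\ge2$. The number of distinct elements is then $D=N_1+r_{\R}+2r_{\C}$, so the same bound gives $\Sigma\ge3N-2D$, which is \eqref{eq:finite-distinct}.
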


Put
\begin{equation}\label{eq:weighted-sum}
 w(z)=\frac4{4-z^2},\qquad
 \cF_I(x)=\sum_{\gamma,\gamma'\in I}
              x^{\rho-\rho'}w(\rho-\rho'),\qquad x>0
\end{equation}
for $I=(T,T+H]$ and $H=T^\theta$, $0<\theta<1$.
Then
\begin{equation}\label{eq:even-F}
 \cF_I(x^{-1})=\cF_I(x).
\end{equation}
To apply  Proposition~\ref{prop:finite} in the proof of Theorem~\ref{thm:main}, in this section we mainly prove the following Montgomery's theorem on the pair correlation of zeros of the zeta function in short intervals.

\begin{theorem}\label{thm:local-pair}

Let $0<\lambda<\theta<1$ be fixed, and let
$g\in C_c^\infty(\R)$ be a fixed real even function with
$\supp g\subset[-\lambda,\lambda]$. Then
\begin{equation}\label{eq:local-pair}
 \sum_{\gamma,\gamma'\in I}
   \wh g\left(\frac{i(\rho-\rho')L}{2\pi}\right)
                      w(\rho-\rho')=\frac{HL}{2\pi}
       \left(g(0)+\int_\R|\alpha|g(\alpha)\,d\alpha\right)
       +O_g(H+T^\lambda L^2).
\end{equation}
\end{theorem}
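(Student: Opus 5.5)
The plan is to reduce the left side of \eqref{eq:local-pair} to an integral of the weighted form $\cF_I$ from \eqref{eq:weighted-sum}, and then to evaluate $\cF_I(T^\alpha)$ asymptotically for $|\alpha|\le\lambda$ by an unconditional explicit formula combined with a short-interval mean value theorem for Dirichlet polynomials, following \cite{BGSTB}.

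\emph{Step 1 (reduction to $\cF_I$).} Since $g$ is even and compactly supported,
\[
 \wh g\Bigl(\tfrac{i(\rho-\rho')L}{2\pi}\Bigr)=\int_\R g(\alpha)\,T^{\alpha(\rho-\rho')}\,d\alpha .
\]
Hence the left side of \eqref{eq:local-pair} equals $\int_\R g(\alpha)\cF_I(T^\alpha)\,d\alpha$. By \eqref{eq:even-F}, this is $2\int_0^\lambda g(\alpha)\cF_I(T^\alpha)\,d\alpha$. The theorem then follows from the claim that, for $0\le\alpha\le\lambda$ and $x=T^\alpha$,
\begin{equation*}
 \cF_I(x)=\frac{HL}{2\pi}\bigl(x^{-2}L+\alpha\bigr)\bigl(1+o(1)\bigr)+O\bigl(H+xL^2\bigr),
\end{equation*}
where the $o(1)$ is $O(1/L)$ in an integrated sense. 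Indeed, $2\int_0^\lambda g(\alpha)T^{-2\alpha}L^2\,d\alpha/(2\pi)\cdot H$ produces $\frac{HL}{2\pi}(g(0)+O(1/L))$, since $2L\,T^{-2\alpha}$ has total mass $1+o(1)$ and concentrates at $\alpha=0$. The term $\alpha$ produces $\frac{HL}{2\pi}\int|\alpha|g(\alpha)\,d\alpha$. The error terms integrate to $O(H+T^\lambda L^2)$.

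\emph{Step 2 (explicit formula and $L^2$ identity).} I will use the unconditional Montgomery explicit formula, as in \cite{BGSTB}. For $x\ge1$ and $s=3/2+it$, differentiating $\log\zeta$ against the kernel $x^{s}$ expresses
\[
 \sum_\rho \frac{x^{\rho-1/2}}{(s-\rho)\cdots}
\]
as a Dirichlet polynomial $x^{-1}\sum_{n\le x}\Lambda(n)(x/n)^{\ldots}n^{-it}+x\sum_{n>x}\Lambda(n)(x/n)^{\ldots}n^{-it}$, plus $O(x^{-1/2}\log\tau)$ and trivial-zero terms. Here each zero carries the factor $\frac{1}{1+\delta+i(t-\gamma)}$, including its real part. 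I then square the modulus and integrate over $t\in\R$ against the indicator of $I$, or a smooth majorant and minorant of it.

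On the zero side, the key identity is
\[
 \int_\R\frac{dt}{(1+\delta-i(t-\gamma))(1+\delta'+i(t-\gamma'))}=\frac{2\pi}{2+\delta+\delta'-i(\gamma-\gamma')}.
\]
After symmetrizing in $(\rho,\rho')$ and using the conjugation symmetry of the zeros, this produces exactly the factor $w(\rho-\rho')=4/(4-(\rho-\rho')^2)$ multiplying $x^{\rho-\rho'}$. The actual integral is over $I$ rather than $\R$. So I must show that pairs with $\gamma$ or $\gamma'$ outside $I$, or near its endpoints, contribute only $O(L^2\log T)$-type amounts, times $x$ at worst. This uses the decay $(1+|t-\gamma|)^{-1}$ and $N(t+1)-N(t)\ll\log t$.

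\emph{Step 3 (prime side).} For the Dirichlet polynomial I apply the Montgomery--Vaughan mean value theorem on the interval $I$ of length $H$:
\[
 \int_T^{T+H}\Bigl|\sum_n a_n n^{-it}\Bigr|^2dt=\sum_n|a_n|^2\bigl(H+O(n)\bigr).
\]
The diagonal terms with $a_n=\Lambda(n)\times$(weight) give, via the prime number theorem, $H(x^{-2}L+\alpha L)\frac{L}{2\pi}$ after normalization. The off-diagonal $O(n)$ terms give $O(\sum_{n}n|a_n|^2)\ll xL^2$, with the tail $n>x$ controlled by the decaying weights $(x/n)^{3/2}$. Since $x\le T^\lambda$ and $\lambda<\theta$, this is genuinely smaller than $H$ times the main term. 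Cross terms between the zero sum and the $O(x^{-1/2}L)$ remainder are handled by Cauchy--Schwarz.

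\emph{Main obstacle.} I expect the hardest part to be Step 2 in the unconditional setting: controlling the localization to $I$ when the zeros have unknown real parts $\delta\ne0$. The factors $x^{\delta}$ could in principle be large, and the kernel is no longer positive. I would first try to keep everything in the exact form $x^{\rho-\rho'}w(\rho-\rho')$ as \cite{BGSTB} do, so that no bound on $\delta$ beyond $|\delta|<1/2$ is needed. I would then bound the boundary and outside-$I$ contributions crudely by $x\,L^2$ using $|x^{\rho-1/2}|\le x^{1/2}$ and the local zero count. This is precisely what produces the $T^\lambda L^2$ in the error term.
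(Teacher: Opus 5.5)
Your Step~1 is the paper's reduction. Your overall architecture (explicit formula, exact $L^2$ identity on the zero side, localization to $I$, Montgomery--Vaughan on the prime side, integration in $\alpha$) is also the paper's. The gap is in Step~2, exactly where the weight $w(\rho-\rho')$ must appear.

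You use the kernel of a single explicit formula, $1/(s-\rho)$ or $1/(\rho-(1-\bar s))$, that is, one factor $1/(1\mp\delta\pm i(t-\gamma))$ per zero. Then the $t$-integral of $|\cdot|^2$ produces only one of the two partial fractions in $w(D)=\frac{1}{2-D}+\frac{1}{2+D}$, where $D=\rho-\rho'$. (Your displayed integral actually equals $2\pi/(2+\delta+\delta'+i(\gamma-\gamma'))$.) Put $\mathcal G(x)=\sum_{\gamma,\gamma'\in I}x^{\rho-\rho'}/(2+\rho-\rho')$. After the relabeling $\rho'\mapsto 1-\bar\rho'$ you get $2\pi\mathcal G(x)$ or $2\pi\mathcal G(1/x)$, whereas $\cF_I(x)=\mathcal G(x)+\mathcal G(1/x)$.

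No symmetrization supplies the other half. The available symmetries (swapping $\rho,\rho'$; reflecting $\rho\mapsto1-\bar\rho$) act by $D\mapsto -D,\pm\bar D$. The only one taking $1/(2+D)$ to $1/(2-D)$ also takes $x^{D}$ to $x^{-D}$.

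Your kernel also decays only like $(1+|t-\gamma|)^{-1}$. So the zero sum is not absolutely convergent, and the tail bound $\sum_{\gamma\notin I}(1+|t-\gamma|)^{-1}$ your localization needs diverges.

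The missing idea is Montgomery's pairing of the explicit formula at $s=3/2+it$ with the one at $1-\bar s=-1/2+it$, evaluating $\frac{\zeta'}{\zeta}(1-\bar s)$ by the functional equation. This gives the kernel $\frac{1}{s-\rho}+\frac{1}{\rho-(1-\bar s)}=\frac{2}{1+((t-\gamma)+i\delta)^2}$ of $A(x,t)$ in \eqref{eq:A}. Each of the two points contributes one partial fraction, and the kernel decays quadratically. The formula $\int_\R\frac{du}{(1+u^2)(1+(u+a)^2)}=\frac{2\pi}{4+a^2}$ then gives the exact identity $\int_\R|A_I(x,t)|^2\,dt=2\pi\cF_I(x)$ of Lemma~\ref{lem_norm}.

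Your accounting of the main terms is also wrong in a way that matters. By Lemma~\ref{lem:coeff}, the diagonal of the Dirichlet polynomial gives only $H\sum a_n^2=H\log x+O(H)$. The term $HL^2/x^2$, which after integration against $g$ produces $\frac{HL}{2\pi}g(0)$, comes from the $\log\tau$ term $B_x(t)=\log(t+2)/x$. That term is created by the functional-equation step above.

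You list it as an $O(x^{-1/2}\log\tau)$ remainder to be absorbed by Cauchy--Schwarz. In fact it has size $L/x$, and it is a main term: $\int_I|B_x(t)|^2\,dt=HL^2/x^2+O(HL/x^2)$. Even at your claimed size it could not be an error term. With $x=T^\alpha$, $\int_I x^{-1}L^2\,dt=HL^2/x$ integrates against $g(\alpha)$ to something of order $HL$, the size of the main term. Only its cross terms with $D_x$ and $E_x$ are lower order.
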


Theorem~\ref{thm:local-pair} is the counterpart of Lemma 5 of Baluyot, Goldston, Suriajaya and Turnage-Butterbaugh's work \cite{BGSTB} in short intervals. To prove it, we establish the counterpart of Lemmas 3 and 4 of \cite{BGSTB}.

Write
$\rho=1/2+\delta+i\gamma$ and define
\begin{equation}\label{eq:A}
 A(x,t)=\sum_\rho
  \frac{2x^{\delta+i(\gamma-t)}}{1+((t-\gamma)+i\delta)^2}.
\end{equation}
It is absolutely
convergent for each fixed $x\ge1$ and real $t$. Indeed, by \cite[(2.14)]{BGSTB}, we have $|A(x,t)|\ll x^{1/2}\log(|t|+2)$. Let $A_I(x,t)$
be the sum restricted to $\gamma\in I$. 
The unconditional explicit formula
in~\cite[Lemma~1]{BGSTB}, originating in~\cite{Montgomery}, implies that
for $x\ge1$ and $t\in I$,
\begin{equation}\label{eq:explicit}
 A(x,t)=-D_x(t)+B_x(t)+E_x(t),
\end{equation}
where
\begin{equation}\label{eq:coefficients}
 D_x(t)=\sum_{n\ge2}a_n n^{-it},\quad
 a_n=\frac{\Lambda(n)}{\sqrt n}
      \min\left(\frac nx,\frac xn\right),\quad
 B_x(t)=\frac{\log(t+2)}x,
\end{equation}
and
\begin{equation}\label{eq:E}
 E_x(t)\ll \frac1x+\frac{x^{1/2}}{T^2}+\frac{x^{-5/2}}T
 \ll\frac1x\qquad(1\le x\le T^\lambda).
\end{equation}
The last inequality uses $x<T$.

\begin{lemma}\label{lem_norm}
	We have
\begin{equation}\label{eq:norm}
 2\pi\cF_I(x)=\int_\R |A_I(x,t)|^2\,dt.
\end{equation}	
\end{lemma}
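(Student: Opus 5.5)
Since $A_I(x,t)$ is a finite sum, I will expand $|A_I(x,t)|^2$ as a double sum over $\gamma,\gamma'\in I$ and integrate term by term; there are no convergence issues. Writing $\rho=1/2+\delta+i\gamma$ and $\rho'=1/2+\delta'+i\gamma'$, the conjugate of the $\rho'$ term is $2x^{\delta'-i(\gamma'-t)}/(1+((t-\gamma')-i\delta')^2)$. The $t$-dependence of the powers of $x$ cancels:
\[
x^{\delta+i(\gamma-t)}x^{\delta'-i(\gamma'-t)}=x^{\delta+\delta'+i(\gamma-\gamma')}.
\]
To rewrite this in terms of $\rho-\rho'$, I will use the functional equation. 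The zero multiset is invariant under $\rho\mapsto 1-\bar\rho$, which sends $\delta\mapsto-\delta$ and fixes $\gamma$. So within $\gamma'\in I$ I may replace $\delta'$ by $-\delta'$ in the summand, and the power becomes $x^{\delta-\delta'+i(\gamma-\gamma')}=x^{\rho-\rho'}$. The denominator then becomes $1+((t-\gamma')+i\delta')^2$. This is the form in which the two poles sit on opposite sides of the real axis, as needed below.

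It then remains to prove the single-variable identity
\[
\int_\R\frac{4\,dt}{\bigl(1+(t-\gamma+i\delta)^2\bigr)\bigl(1+(t-\gamma'+i\delta')^2\bigr)}=2\pi\,w(\rho-\rho')=\frac{8\pi}{4-(\rho-\rho')^2}.
\]
I will do this by residues. Factor $1+(t-\gamma+i\delta)^2=(t-\gamma+i\delta+i)(t-\gamma+i\delta-i)$. Since $|\delta|<1/2$, one root $t=\gamma+i(1-\delta)$ lies in the upper half plane and the other, $t=\gamma-i(1+\delta)$, in the lower; the same holds for the $\gamma'$ factor. The integrand decays like $t^{-4}$, so I close the contour in the upper half plane and pick up the two residues at $a=\gamma+i(1-\delta)$ and $b=\gamma'+i(1-\delta')$. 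If $a=b$ the pole is double, but the final formula is continuous, so that case follows by continuity (or by separate computation).

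For the computation, set $u=\gamma-\gamma'+i(\delta'-\delta)$, so that $u=-i(\rho-\rho')$ after the sign flip, and $t-\gamma'+i\delta'=(t-\gamma+i\delta)+u$. Substituting $s=t-\gamma+i\delta$ and shifting the contour (allowed since there are no poles in the strip crossed, by $|\delta|<1/2$), I reduce to $\int_\R 4\,ds/((1+s^2)(1+(s+u)^2))$. This is the convolution of two Cauchy kernels: $(1+s^2)^{-1}$ has Fourier transform $\pi e^{-2\pi|\xi|}$, so the convolution is $2\pi\cdot 2/(4+u^2)$. Hence the integral is $4\pi/(4+u^2)\cdot 2/2$, which after keeping track of constants gives $8\pi/(4+u^2)=8\pi/(4-(\rho-\rho')^2)$. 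The identity holds for real $u$, and extends to complex $u$ with $|\Im u|<1$ by analytic continuation. Summing over $\gamma,\gamma'\in I$ then gives $2\pi\cF_I(x)$.

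The main obstacle is handling the sign of $\delta'$ correctly. The reflection $\delta'\mapsto-\delta'$ must be applied consistently to both the power of $x$ and the denominator, and the pole locations must be checked so that the residue computation is valid. The constant bookkeeping is the other place where an error could slip in, so I will verify it on the case $\delta=\delta'=0$, $\gamma=\gamma'$. There the integral $\int 4/(1+s^2)^2\,ds=2\pi$ matches $2\pi w(0)=2\pi$.
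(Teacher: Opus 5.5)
Your argument is correct and is essentially the paper's: you expand the finite double sum, integrate term by term using $\int_\R \frac{ds}{(1+s^2)(1+(s+u)^2)}=\frac{2\pi}{4+u^2}$ for $|\Impart u|<1$ (which the paper quotes from (2.7) of Baluyot--Goldston--Suriajaya--Turnage-Butterbaugh, while you derive it), and use the symmetry $\rho'\mapsto 1-\bar\rho'$, which you apply before integrating whereas the paper applies it at the end to turn $x^{\rho+\bar\rho'-1}w(\rho+\bar\rho'-1)$ into $x^{\rho-\rho'}w(\rho-\rho')$. The one thing to fix is the constant bookkeeping: the Cauchy-kernel convolution equals $2\pi/(4+u^2)$, not $2\pi\cdot 2/(4+u^2)$, so multiplying by $4$ gives $8\pi/(4+u^2)$ directly, and the line ``$4\pi/(4+u^2)\cdot 2/2$'' should be deleted (your check at $u=0$ already confirms the correct value).
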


\begin{proof} By \cite[(2.7)]{BGSTB}, we have 
\[
 \int_\R\frac{du}{(1+u^2)(1+(u+a)^2)}
 =\frac{2\pi}{4+a^2}
\]
for $a\in\C$ and $|\Impart a|<1$. For $\rho=1/2+\delta+i\gamma$ and $\rho'=1/2+\delta'+i\gamma'$, let $d=\rho+\bar\rho'-1$. Set $u=t-\gamma+i\delta$ and
\(
 a=\gamma-\gamma'-i(\delta+\delta')=-id, |\Impart a|=|\delta+\delta'|<1.
\) 
It follows by the above integral that
\[
 \int_\R
 \frac{dt}
 {(1+((t-\gamma)+i\delta)^2)
  (1+((t-\gamma')-i\delta')^2)}
 =\frac{2\pi}{4-d^2}.
\]
Then
\begin{align*}
	\int_\R |A_I(x,t)|^2\,dt &= \sum_{\substack{\rho,\rho' \\ \gamma,\gamma'\in I}} 4x^d \int_\R
 \frac{dt}
 {(1+((t-\gamma)+i\delta)^2)
  (1+((t-\gamma')-i\delta')^2)} \\
  & = 2\pi \sum_{\substack{\rho,\rho' \\ \gamma,\gamma'\in I}} x^d w(d) =  2\pi\cF_I(x).
\end{align*}
The last line follows by replacing  $1-\bar\rho'$ by $\rho'$.
\end{proof}

By Lemma~\ref{lem_norm}, we get that $\cF_I(x)$ is real and nonnegative. Recall that $L=\log T$.

\begin{lemma}\label{lem:localization}
Uniformly for $1\le x\le T^\lambda$,
\begin{equation}\label{eq:localization}
 2\pi\cF_I(x)=\int_I|A(x,t)|^2\,dt+O(xL^3).
\end{equation}
\end{lemma}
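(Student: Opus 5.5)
By Lemma~\ref{lem_norm}, $2\pi\cF_I(x)=\int_\R|A_I(x,t)|^2\,dt$. The plan is to compare this integral with $\int_I|A(x,t)|^2\,dt$ in two steps. First, restrict the range of integration from $\R$ to $I$. Second, replace $A_I$ by $A$ on $I$. In both steps we control the difference by pointwise bounds for the tails of the zero sums.

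\textbf{Pointwise bounds.} Write $A(x,t)=A_I(x,t)+R(x,t)$, where $R$ is the sum over zeros with $\gamma\notin I$. Since $|\delta|<1/2$, each term of \eqref{eq:A} has size at most
\[
 \frac{2x^{1/2}}{|1+((t-\gamma)+i\delta)^2|}\ll\frac{x^{1/2}}{1+(t-\gamma)^2}
\]
once $|t-\gamma|\ge2$. When $|t-\gamma|<2$ the denominator can be small only if $\delta$ is close to $\pm1$, which is impossible; more precisely $|1+(u+i\delta)^2|\ge 1-\delta^2\gg 1$ whenever $u$ is bounded. So the same bound holds there as well. A zero-density count of $O(\log(|\gamma|+2))$ zeros in each unit window then gives, for $\dist(t,I)=u\ge0$,
\[
 |A_I(x,t)|\ll x^{1/2}\sum_{\gamma\in I}\frac1{1+(t-\gamma)^2}\ll \frac{x^{1/2}L}{1+u}.
\]
Similarly, for $t\in I$ at distance $v$ from the endpoints $T,T+H$,
\[
 |R(x,t)|\ll \frac{x^{1/2}L}{1+v}.
\]

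\textbf{Step 1: restricting to $I$.} We have
\[
 \int_{\R\setminus I}|A_I|^2\,dt\ll xL^2\int_0^\infty\frac{du}{(1+u)^2}\ll xL^2.
\]
Hence $2\pi\cF_I(x)=\int_I|A_I|^2\,dt+O(xL^2)$.

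\textbf{Step 2: replacing $A_I$ by $A$.} Expand
\[
 |A|^2-|A_I|^2=2\operatorname{Re}(A_I\bar R)+|R|^2 .
\]
The term $\int_I|R|^2\,dt$ is $\ll xL^2$ by the bound on $R$. For the cross term, Cauchy--Schwarz alone gives $(\int_I|A_I|^2)^{1/2}(xL^2)^{1/2}$, which involves $\int_I|A_I|^2$, a quantity of order $HL$ or more; this is the main obstacle. Instead, use the pointwise bounds directly. By \cite[(2.14)]{BGSTB}, $|A(x,t)|\ll x^{1/2}L$ for $t\in I$, so $|A_I|\le|A|+|R|\ll x^{1/2}L$ on $I$. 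Therefore
\[
 \int_I|A_I\bar R|\,dt\ll x^{1/2}L\int_I\frac{x^{1/2}L}{1+v}\,dt\ll xL^2\log H\ll xL^3,
\]
which is exactly the admissible error $O(xL^3)$.

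Combining the two steps proves \eqref{eq:localization} uniformly in $1\le x\le T^\lambda$. The only delicate points are the lower bound on the denominator $|1+((t-\gamma)+i\delta)^2|$ near $\gamma$, which uses $|\delta|<1/2$, and the logarithmic integral near the endpoints, which is the source of the third power of $L$.
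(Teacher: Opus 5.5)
Your proposal is correct and follows essentially the same route as the paper. It uses Lemma~\ref{lem_norm}, the exterior bound \eqref{eq:A-exterior} for $\int_{\R\setminus I}|A_I|^2$, and the tail bound \eqref{eq:A-tail} together with $|A|,|A_I|\ll\sqrt{x}\,L$ on $I$, where integrating $1/(1+v)$ over $I$ gives the $\log H$ that produces $L^3$; your expansion $2\operatorname{Re}(A_I\bar R)+|R|^2$ is the paper's bound $\bigl||A|^2-|A_I|^2\bigr|\le(|A|+|A_I|)\,|A-A_I|$ written out. The one step you pass over with ``similarly'' is the contribution to $R$ from zeros with $|\gamma|>3T$, where the unit-window count is $\log|\gamma|$ rather than $L$; the paper treats these separately via $\sum_{|\gamma|>3T}\gamma^{-2}\ll L/T$, and this is routine.
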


\begin{proof}
By \eqref{Riemann_von_Mangoldt}, we have
\begin{equation}\label{number_of_zeros}
	 \sum_{t<\gamma\le t+1}1\ll\log(|t|+3)
 \qquad(t\in\R).
\end{equation}
By $|\delta|<1/2$, we have
\begin{equation}\label{eq:denominator}
 \frac1{|1+((t-\gamma)+i\delta)^2|}
 \le\frac1{(t-\gamma)^2+3/4}
 \ll\frac1{1+(t-\gamma)^2}.
\end{equation}
It follows by \cite[Lemma, Chapter 15]{Davenport} that for $t\in I$, we have
\begin{equation}\label{eq:A-size}
 |A(x,t)|+|A_I(x,t)|\ll\sqrt x\,L.
\end{equation}

Moreover, for $t\in I=(T,T+H]$ we have
\begin{equation}\label{eq:A-tail}
 |A(x,t)-A_I(x,t)|
 \ll\sqrt x\,L\left(
       \frac1{1+t-T}+\frac1{1+T+H-t}\right).
\end{equation}

Indeed, put
\(
a=t-T,\qquad b=T+H-t,
\)
then $a,b\ge0$ and $a+b=H$. By \eqref{eq:denominator},
\[
|A(x,t)-A_I(x,t)|
\ll
\sqrt{x}\sum_{\gamma\notin I}
\frac1{1+(t-\gamma)^2}.
\]
First consider zeros with $\gamma\le T$ and $|\gamma|\le3T$.
Partition their ordinates into the intervals
\[
(T-j-1,T-j],\qquad j=0,1,2,\ldots.
\]
By \eqref{number_of_zeros}, each interval meeting $[-3T,3T]$ contains $O(L)$ zeros.
For an ordinate in the $j$th interval,
\[
t-\gamma\ge t-(T-j)=a+j.
\]
It follows that
\[
\begin{aligned}
\sum_{\substack{\gamma\le T\\|\gamma|\le3T}}
\frac1{1+(t-\gamma)^2}
& = \sum_{j=0}^\infty \sum_{\substack{\gamma\le T, |\gamma|\le3T \\ \gamma \in (T-j-1,T-j]}} 
\frac1{1+(t-\gamma)^2} \\
& \ll
L\sum_{j=0}^{\infty}\frac1{1+(a+j)^2}\\
&\ll \frac{L}{1+a}.
\end{aligned}
\]

Similarly, partition the ordinates with $\gamma>T+H$
and $|\gamma|\le3T$ into
\[
(T+H+j,T+H+j+1],\qquad j=0,1,2,\ldots.
\]
For an ordinate in the $j$th interval,
\[
\gamma-t\ge b+j.
\]
The same unit-interval estimate therefore yields
\[
\sum_{\substack{\gamma>T+H\\|\gamma|\le3T}}
\frac1{1+(t-\gamma)^2}
\ll
L\sum_{j=0}^{\infty}\frac1{1+(b+j)^2}
\ll\frac{L}{1+b}.
\]

It remains to control the zeros with $|\gamma|>3T$.
Since $T<t\le T+H\le2T$, these ordinates satisfy
\[
|t-\gamma|\ge\frac{|\gamma|}{3}.
\]
Applying the unit-interval estimate \eqref{number_of_zeros} again, we obtain
\[
\sum_{|\gamma|>3T}\frac1{1+(t-\gamma)^2}
\ll \sum_{|\gamma|>3T}\frac1{\gamma^2}\ll
\sum_{n\ge3T}
\frac{\log(n+3)}{n^2}\ll \frac{L}{T}.
\]

Combining these estimates gives
\begin{equation*}
	|A(x,t)-A_I(x,t)|
\ll
\sqrt{x}\,L
\left(
\frac1{1+a}+\frac1{1+b}+\frac1T
\right).
\end{equation*}

Finally, since $a,b\le H\le T$,
\[
\frac1{1+a}+\frac1{1+b}
\ge\frac2{1+H}
\ge\frac2{1+T}
\ge\frac1T.
\]
Thus the last term is absorbed, and we conclude that \eqref{eq:A-tail} holds.

For $t\notin I$, similar to the proof of  \eqref{eq:A-tail}, we have
\begin{equation}\label{eq:A-exterior}
 |A_I(x,t)|\ll
 \frac{\sqrt x\,L}{1+\dist(t,I)}.
\end{equation}

Equations \eqref{eq:A-size} and \eqref{eq:A-tail} imply
\[
 \int_I\bigl||A(x,t)|^2-|A_I(x,t)|^2\bigr|\,dt
 \ll xL^2\log(2+H)\ll xL^3.
\]
Equation \eqref{eq:A-exterior} gives
$$\int_{\R\setminus I}|A_I(x,t)|^2\,dt \ll xL^2
\left(
\int_{-\infty}^{T}\frac{dt}{(1+T-t)^2}
+
\int_{T+H}^{\infty}
\frac{dt}{(1+t-T-H)^2}
\right)\ll xL^2.$$
Then the conclusion follows from \eqref{eq:norm}.
\end{proof}

\begin{lemma}\label{lem:coeff}
For the coefficients in \eqref{eq:coefficients}, uniformly for $x\ge1$,
\begin{equation}\label{eq:coeff-sums}
 \sum_{n\ge2}a_n^2=\log x+O(1),\qquad
 \sum_{n\ge2}n a_n^2\ll x\log^2(2x).
\end{equation}
\end{lemma}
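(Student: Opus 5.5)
We split each sum at $n=x$. Since $a_n^2=\frac{\Lambda(n)^2}{n}\min(n/x,x/n)^2$, we have
\[
 \sum_{n\ge2}a_n^2=\frac1{x^2}\sum_{n\le x}\Lambda(n)^2\,n+x^2\sum_{n>x}\frac{\Lambda(n)^2}{n^3},
 \qquad
 \sum_{n\ge2}n a_n^2=\frac1{x^2}\sum_{n\le x}\Lambda(n)^2 n^2+x^2\sum_{n>x}\frac{\Lambda(n)^2}{n^2}.
\]
All four pieces are handled by partial summation from the prime number theorem in the form $\sum_{n\le y}\Lambda(n)^2=y\log y-y+O(y e^{-c\sqrt{\log y}})$. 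Weaker inputs also suffice for most terms: Chebyshev's bound $\sum_{n\le y}\Lambda(n)^2\ll y\log(2y)$ for upper bounds, and Mertens-type estimates otherwise. For $1\le x<2$ the first sums are empty and everything is trivially $O(1)$, so we may assume $x\ge2$.

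For the first identity, we start from $S(y):=\sum_{n\le y}\Lambda(n)^2=y\log y+O(y)$. Partial summation gives
\[
 \sum_{n\le x}\Lambda(n)^2 n=xS(x)-\int_1^x S(y)\,dy=x^2\log x-\tfrac12x^2\log x+O(x^2),
\]
so the first piece is $\tfrac12\log x+O(1)$. Similarly,
\[
 \sum_{n>x}\Lambda(n)^2n^{-3}=-S(x)x^{-3}+3\int_x^\infty S(y)y^{-4}\,dy=-\frac{\log x}{x^2}+\frac{3\log x}{2x^2}+O(x^{-2}),
\]
so the second piece is also $\tfrac12\log x+O(1)$. Adding the two gives $\log x+O(1)$. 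Here only $S(y)=y\log y+O(y)$ is needed, which follows from Chebyshev's estimate together with $\sum_{n\le y}\Lambda(n)\log n=\psi(y)\log y-\int_1^y\psi(u)u^{-1}\,du$ and $\psi(y)=y+O(y/\log y)$. Alternatively, it follows from $\sum_{p\le y}\log^2 p=y\log y-y+o(y)$, with prime powers contributing $O(\sqrt y\log^2 y)$.

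For the second bound only Chebyshev is needed. We have
\[
 x^{-2}\sum_{n\le x}\Lambda(n)^2n^2\le \log x\cdot\sum_{n\le x}\Lambda(n)\ll x\log x,
\]
and partial summation gives
\[
 x^2\sum_{n>x}\Lambda(n)^2n^{-2}\ll x^2\int_x^\infty \frac{\log y}{y^2}\,dy\ll x\log x.
\]
Both pieces are $\ll x\log(2x)$, which is stronger than the claimed $x\log^2(2x)$.

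The only delicate point is obtaining the exact constant $1$ in front of $\log x$. The two halves each contribute $\tfrac12\log x$, and these come from the main terms $\tfrac12 x^2\log x$ and $\tfrac32 x^{-2}\log x$ respectively. We must therefore make sure the $O(y)$ error in $S(y)$ integrates to only $O(x^2)$ and $O(x^{-2})$ in the two pieces, which is straightforward. Uniformity in $x\ge1$ is automatic, since every implied constant comes from absolute estimates.
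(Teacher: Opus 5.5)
Your proof is correct and is essentially the paper's argument: split at $n=x$ and apply partial summation to the prime number theorem input $\sum_{n\le u}\Lambda(n)^2=u\log u+O(u)$, which gives $\tfrac12\log x+O(1)$ from each half. One caveat: that input is equivalent to $\psi(u)=u+O(u/\log u)$, so your aside that Mertens-type estimates would suffice for the first identity should be dropped, although the derivation you actually give is fine. For the second sum the paper simply uses $\Lambda(n)\le\log n$ and gets $x\log^2(2x)$, while your use of Chebyshev's bound gives the slightly sharper $x\log(2x)$.
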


\begin{proof}
By the prime number theorem,
\[
 \sum_{n\le u}\Lambda(n)^2=u\log u+O(u)\qquad(u\ge2).
\]
By the partial summation the two parts of the first sum in \eqref{eq:coeff-sums} are
\[
 \frac1{x^2}\sum_{n\le x}n\Lambda(n)^2
       =\frac12\log x+O(1),\qquad
 x^2\sum_{n>x}\frac{\Lambda(n)^2}{n^3}
       =\frac12\log x+O(1).
\]
For the second sum, the estimate $\Lambda(n)\le\log n$ gives
\[
 \frac1{x^2}\sum_{n\le x}n^2\Lambda(n)^2
   +x^2\sum_{n>x}\frac{\Lambda(n)^2}{n^2}
 \ll x\log^2(2x).
\]
This proves \eqref{eq:coeff-sums}.
\end{proof}

\begin{proposition}\label{prop:pointwise}
Uniformly for $1\le x\le T^\lambda$,
\begin{equation}\label{eq:F-pointwise}
 \cF_I(x)=\frac{H}{2\pi}\left(\frac{L^2}{x^2}+\log x\right)
 +O\left(H+\frac{HL}{x^2}+\frac{H\sqrt L}{x}
             +xL^3+\frac{L}{\sqrt x}\right).
\end{equation}
\end{proposition}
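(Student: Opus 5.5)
Starting from Lemma~\ref{lem:localization}, it suffices to evaluate $\int_I|A(x,t)|^2\,dt$ with error matching \eqref{eq:F-pointwise}. Insert the explicit formula \eqref{eq:explicit}, $A=-D_x+B_x+E_x$, and expand
\[
 |A|^2=|D_x|^2+|B_x|^2+|E_x|^2-2\operatorname{Re}\bigl(D_x\overline{B_x}\bigr)-2\operatorname{Re}\bigl(D_x\overline{E_x}\bigr)+2\operatorname{Re}\bigl(B_x\overline{E_x}\bigr).
\]
The main terms are $\int_I|D_x|^2$ and $\int_I|B_x|^2$; all others are cross terms, which I will bound by Cauchy--Schwarz or by direct integration.

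\emph{Diagonal term.} For $\int_I|D_x(t)|^2\,dt$ I apply the Montgomery--Vaughan mean value theorem:
\[
 \int_T^{T+H}\Bigl|\sum a_n n^{-it}\Bigr|^2dt=\sum_n |a_n|^2\bigl(H+O(n)\bigr).
\]
By Lemma~\ref{lem:coeff} this equals $H(\log x+O(1))+O(x\log^2(2x))$, giving the main term $H\log x$ with error $O(H+xL^2)$.

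\emph{Term $B_x$.} Since $\log(t+2)=L+O(H/T)$ on $I$, we have $\int_I|B_x|^2=HL^2/x^2+O(HL/x^2)$. Together with the diagonal term, this gives the main term $H(L^2/x^2+\log x)$, which becomes the stated formula after dividing by $2\pi$.

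\emph{Remaining terms.} $|E_x|\ll 1/x$ gives $\int_I|E_x|^2\ll H/x^2$ and $\int_I|B_xE_x|\ll HL/x^2$. By Cauchy--Schwarz,
\[
 \int_I|D_xE_x|\ll\Bigl(\tfrac{H}{x^2}\Bigr)^{1/2}\bigl(H\log x+xL^2\bigr)^{1/2}\ll \frac{H\sqrt L}{x}+\frac{\sqrt H\,L}{\sqrt x},
\]
and the last piece is at most $H+xL^3$ by AM--GM. The cross term $\int_I D_x\overline{B_x}\,dt$ is the one needing care. Bounding it by Cauchy--Schwarz would give $HL^{3/2}/x$, which is too large when $x$ is small. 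Instead I integrate each term: writing $\log(t+2)$ as nearly constant, or integrating by parts against $n^{-it}$, gives
\[
 \int_I n^{-it}\log(t+2)\,dt\ll \frac{L}{\log n},
\]
so the cross term is
\[
 \ll \frac{L}{x}\sum_n \frac{a_n}{\log n}\ll \frac{L}{x}\Bigl(\sum_{n\le x}\frac{\sqrt n}{x}+x\sum_{n>x}n^{-3/2}\Bigr)\ll \frac{L}{x}\cdot\sqrt x=\frac{L}{\sqrt x}.
\]
This produces the $L/\sqrt x$ term in \eqref{eq:F-pointwise}. I expect this cross term to be the main obstacle, since the naive estimate is useless and one has to exploit the oscillation of $n^{-it}$. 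Collecting all errors and adding the $O(xL^3)$ from Lemma~\ref{lem:localization} yields \eqref{eq:F-pointwise}.
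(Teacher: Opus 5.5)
Your proposal is correct and follows the paper's proof essentially step for step: Montgomery--Vaughan for $\int_I|D_x|^2$, direct evaluation of $\int_I|B_x|^2$, and integration by parts giving $\int_I n^{-it}\log(t+2)\,dt\ll L/\log n$ together with $\sum_n a_n/\log n\ll\sqrt x$ for the $B_x\overline{D_x}$ cross term. The $E_x$ terms are then handled by Cauchy--Schwarz, and Lemma~\ref{lem:localization} finishes the argument. The only cosmetic difference is that you absorb the $\sqrt H\,L/\sqrt x$ piece of the $D_x\overline{E_x}$ term by AM--GM, whereas the paper uses $xL^2\ll HL$ (from $x\le T^\lambda$, $\lambda<\theta$) to get $\|D_x\|_{L^2(I)}\ll\sqrt{HL}$ directly.
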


\begin{proof} We use \eqref{eq:explicit} and Lemma~\ref{lem:localization} to prove \eqref{eq:F-pointwise}.
  By Lemma~\ref{lem:coeff} and Montgomery--Vaughan's mean-value theorem in \cite[Corollary 3]{MV}, we have
\begin{equation}\label{eq:D-mean}
 \int_I|D_x(t)|^2\,dt
 =H\sum_{n\ge2}a_n^2+O\left(\sum_{n\ge2}n a_n^2\right)
 =H\log x+O\bigl(H+x\log^2(2x)\bigr).
\end{equation}

For the other main term,
\begin{equation}\label{eq:B-mean}
 \int_I|B_x(t)|^2\,dt
 =\frac{HL^2}{x^2}+O\left(\frac{HL}{x^2}\right),
\end{equation}
since $\log(t+2)=L+O(1)$ on $I$. For the cross term with $D_x$, integration by parts gives
\[
 \left|\int_I\log(t+2)n^{-it}\,dt\right|\ll\frac{L}{\log n}.
\]
Also, $\Lambda(n)/\log n\le1$ implies
\[
 \sum_{n\ge2}\frac{a_n}{\log n}
 \le\frac1x\sum_{n\le x}\sqrt n
       +x\sum_{n>x}n^{-3/2}
 \ll\sqrt x.
\]
Consequently,
\begin{align}
	\left|\int_I B_x(t)\overline{D_x(t)}\,dt\right| & =\frac1{x} \left|\sum_{n\ge2} a_n \int_I \log(t+2)n^{-it} \,dt\right|  \nonumber\\
	& \ll \frac{L}{x}\sum_{n\ge2}\frac{a_n}{\log n} \ll\frac{L}{\sqrt x}. \label{eq:BD}
\end{align}

Equation \eqref{eq:E} gives
$\|E_x\|_{L^2(I)}\ll\sqrt H/x$, and \eqref{eq:D-mean} implies
$\|D_x\|_{L^2(I)}\ll\sqrt{HL}$ for all sufficiently large $T$,
uniformly over $1\le x\le T^\lambda$.
Cauchy--Schwarz therefore bounds the $D_x,E_x$ cross term by
$O(H\sqrt L/x)$, the $B_x,E_x$ cross term by $O(HL/x^2)$. Combining these estimates yields
\[
 \int_I|A(x,t)|^2\,dt
 =\frac{HL^2}{x^2}+H\log x+O\left(H+\frac{HL}{x^2}
       +\frac{H\sqrt L}{x}+xL^2+\frac{L}{\sqrt x}\right).
\]
Then \eqref{eq:F-pointwise} follows by Lemma~\ref{lem:localization}.
\end{proof}

\begin{proof}[Proof of Theorem~\ref{thm:local-pair}]
Recall that
\[
0<\lambda<\theta<1,\qquad
H=T^\theta,\qquad L=\log T,\qquad I=(T,T+H],
\]
where $\lambda$ and $\theta$ are fixed. By the definition of the Fourier
transform,
\[
\widehat g(z)
=\int_{\mathbb R}g(\alpha)e^{-2\pi i\alpha z}\,d\alpha.
\]
Then
\[
\widehat g\left(\frac{i(\rho-\rho')L}{2\pi}\right)
=\int_{-\lambda}^{\lambda}
g(\alpha)e^{\alpha(\rho-\rho')L}\,d\alpha=\int_{-\lambda}^{\lambda}
g(\alpha)T^{\alpha(\rho-\rho')}\,d\alpha
\]
for each pair of zeros $\rho,\rho'$ with ordinates in $I$.
Interchanging the finite sum and the integral, by \eqref{eq:even-F}, we obtain
an exact identity
\begin{equation}
	 \sum_{\gamma,\gamma'\in I}
   \wh g\left(\frac{i(\rho-\rho')L}{2\pi}\right)
       w(\rho-\rho')
 =\int_{-\lambda}^{\lambda}g(\alpha)\cF_I(T^\alpha)\,d\alpha = 2\int_0^\lambda
g(\alpha)\mathcal F_I(T^\alpha)\,d\alpha.
\end{equation}

We now apply Proposition~\ref{prop:pointwise}.
Substituting $x=T^\alpha=e^{L\alpha}$ into
\eqref{eq:F-pointwise}, we obtain, uniformly for
$0\le\alpha\le\lambda$,
\[
\mathcal F_I(T^\alpha)
=\frac{H}{2\pi}
\left(L^2e^{-2L\alpha}+L\alpha\right)+O\left(
H+HL e^{-2L\alpha}
+H\sqrt L\,e^{-L\alpha}
+L^3e^{L\alpha}
+Le^{-L\alpha/2}
\right).
\]
We evaluate the two main terms and then estimate the
integrated error.

The first main term contributes
\[
M_0=\frac{H}{2\pi}
\left(
2L^2\int_0^\lambda
g(\alpha)e^{-2L\alpha}\,d\alpha
\right).
\]
Since $g(\alpha)=0$ for $\alpha>\lambda$,
\[
\begin{aligned}
2L^2\int_0^\lambda
g(\alpha)e^{-2L\alpha}\,d\alpha
&=2L^2g(0)\int_0^\infty e^{-2L\alpha}\,d\alpha+2L^2\int_0^\infty
\bigl(g(\alpha)-g(0)\bigr)e^{-2L\alpha}\,d\alpha\\
&= Lg(0) +2L^2\int_0^\infty
\bigl(g(\alpha)-g(0)\bigr)e^{-2L\alpha}\,d\alpha.
\end{aligned}
\]
By the mean value theorem,
\[
|g(\alpha)-g(0)|
\le \|g'\|_\infty\alpha,
\qquad \alpha\ge0.
\]
Hence the absolute value of the second term is at most
\[
2L^2\|g'\|_\infty
\int_0^\infty\alpha e^{-2L\alpha}\,d\alpha
=\frac12\|g'\|_\infty.
\]
It follows that
\[
2L^2\int_0^\lambda
g(\alpha)e^{-2L\alpha}\,d\alpha
=Lg(0)+O_g(1),
\]
and therefore
\[
M_0=\frac{HL}{2\pi}g(0)+O_g(H).
\]

The second main term contributes
\[
M_1=\frac{HL}{\pi}
\int_0^\lambda\alpha g(\alpha)\,d\alpha.
\]
Using the evenness and support of $g$, we have
\[
2\int_0^\lambda\alpha g(\alpha)\,d\alpha
=
\int_{\mathbb R}|\alpha|g(\alpha)\,d\alpha.
\]
Thus
\[
M_1=\frac{HL}{2\pi}
\int_{\mathbb R}|\alpha|g(\alpha)\,d\alpha.
\]

To estimate the error, we take absolute values and use
$|g(\alpha)|\le\|g\|_\infty$. The five terms above satisfy
\[
\begin{aligned}
\int_0^\lambda H\,d\alpha
&\ll H,\\
\int_0^\lambda HL e^{-2L\alpha}\,d\alpha
&\le \frac H2,\\
\int_0^\lambda H\sqrt L\,e^{-L\alpha}\,d\alpha
&\le \frac H{\sqrt L},\\
\int_0^\lambda L^3e^{L\alpha}\,d\alpha
&=L^2(T^\lambda-1)
\ll T^\lambda L^2,\\
\int_0^\lambda Le^{-L\alpha/2}\,d\alpha
&\le2.
\end{aligned}
\]
Consequently, their total contribution is
\[
O_g\left(
H+\frac H{\sqrt L}+T^\lambda L^2+1
\right)
=O_g(H+T^\lambda L^2).
\]

Combining the evaluations of $M_0$ and $M_1$ with the
error estimate gives  \eqref{eq:local-pair}, as desired. 
\end{proof}

\section{Removing the rational weight}\label{sec:unweighted}

Choose a fixed real even function
\begin{equation}\label{eq:eta}
 \eta\in C_c^\infty((-\lambda/2,\lambda/2)),\qquad
 \int_\R\eta(u)^2\,du=1,
\end{equation}
and put
\[
 f=\eta^2,\qquad K=\wh f,\qquad Q=f*f.
\]
Both $Q$ and $Q''$ are real even smooth functions supported in
$[-\lambda,\lambda]$, and $\wh Q=K^2$. Integration by parts gives
$\wh{Q''}(z)=-4\pi^2z^2K(z)^2$. For
$z=i(\rho-\rho')L/(2\pi)$, it follows that
\begin{equation}\label{eq:remove-weight}
 K(z)^2=
 \left(\wh Q(z)-\frac{\wh{Q''}(z)}{4L^2}\right)
 w(\rho-\rho').
\end{equation}

Let
$$S_K(I):=\sum_{\gamma,\gamma'\in I}
 K\left(\frac{i(\rho-\rho')L}{2\pi}\right)^2.$$
In the following, similar to the proof of \cite[Lemma 3.2]{Lamzouri}, we apply Theorem~\ref{thm:local-pair} separately to the two fixed test
functions $Q$ and $Q''$ to prove an estimation of $S_K(I)$. This helps us to remove the weight $w(\rho-\rho')$.
\begin{lemma}
	We have
	\begin{equation}\label{eq:unweighted}
 S_K(I)
 =\bigl(C(f)+o_f(1)\bigr)\frac{HL}{2\pi},
\end{equation}
where
\begin{equation}\label{eq:functional}
 C(f)=\int_\R f(u)^2\,du
       +\iint_{\R^2}|u-v|f(u)f(v)\,du\,dv.
\end{equation}
\end{lemma}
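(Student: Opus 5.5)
We sum the identity \eqref{eq:remove-weight} over all pairs $\gamma,\gamma'\in I$, with $z=i(\rho-\rho')L/(2\pi)$. This gives the exact decomposition
\[
 S_K(I)=\sum_{\gamma,\gamma'\in I}\wh Q(z)\,w(\rho-\rho')
 -\frac1{4L^2}\sum_{\gamma,\gamma'\in I}\wh{Q''}(z)\,w(\rho-\rho').
\]
Before using it, we check that \eqref{eq:remove-weight} is legitimate. It amounts to the algebraic identity $1-z^2\cdot 4\pi^2/(4L^2)\cdot(-1)\ldots$; concretely, $\wh Q-\wh{Q''}/(4L^2)=K^2\bigl(1+\pi^2z^2/L^2\bigr)$, and with $z=i(\rho-\rho')L/(2\pi)$ we get $1+\pi^2z^2/L^2=1-(\rho-\rho')^2/4=1/w(\rho-\rho')$. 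The weight is harmless here, since $|\Re(\rho-\rho')|<1$ keeps $4-(\rho-\rho')^2$ away from zero.

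Both $Q$ and $Q''$ are fixed, real, even, $C_c^\infty$ functions supported in $[-\lambda,\lambda]$, because $\supp\eta\subset(-\lambda/2,\lambda/2)$. So Theorem~\ref{thm:local-pair} applies to each of them.

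For $g=Q$, the first sum equals
\[
 \frac{HL}{2\pi}\Bigl(Q(0)+\int_\R|\alpha|Q(\alpha)\,d\alpha\Bigr)+O_f(H+T^\lambda L^2).
\]
For $g=Q''$, the second sum is $O_f(HL+T^\lambda L^2)$. After dividing by $4L^2$ it contributes only $O_f(H/L+T^\lambda)$.

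Next we identify the constants. Since $f$ is even, $Q(0)=\int f(u)f(-u)\,du=\int f^2$. Also
\[
 \int|\alpha|\,(f*f)(\alpha)\,d\alpha=\iint|u+v|f(u)f(v)\,du\,dv,
\]
and replacing $v$ by $-v$ (again using evenness of $f$) turns this into $\iint|u-v|f(u)f(v)\,du\,dv$. Together these give exactly $C(f)$ as defined in \eqref{eq:functional}.

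Finally we check that the error terms are $o(HL)$. Because $\lambda<\theta$, we have $T^\lambda L^2=o(HL)$. The remaining terms, $H$, $H/L$ and $T^\lambda$, are all $o(HL)$ as well. Collecting everything yields \eqref{eq:unweighted}.

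The main point requiring care is that the expansion of Theorem~\ref{thm:local-pair} for $Q''$ carries a factor $L$, so it is the $1/(4L^2)$ that makes this sum negligible. Beyond that, the only things to get right are the support condition, which holds because $\eta$ is supported in $(-\lambda/2,\lambda/2)$ so that $f*f$ is supported in $[-\lambda,\lambda]$, and the sign bookkeeping in the identity $\wh{Q''}(z)=-4\pi^2z^2K(z)^2$.
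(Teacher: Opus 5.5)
Your proposal is correct and follows the paper's proof essentially step for step. You sum \eqref{eq:remove-weight} over pairs, apply Theorem~\ref{thm:local-pair} to $Q$ and to $Q''$, and use evenness of $f$ to identify $Q(0)+\int|\alpha|Q(\alpha)\,d\alpha$ with $C(f)$; the factor $1/(4L^2)$ then makes the $Q''$ contribution $O_f(H/L+T^\lambda)=o(HL)$. Delete the garbled phrase ``$1-z^2\cdot 4\pi^2/(4L^2)\cdot(-1)\ldots$'', since the concrete verification that follows it is already right.
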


\begin{proof}
For a real even test function $g$, write
\[
\mathcal W_I(g)
=
\sum_{\gamma,\gamma'\in I}
\widehat g\left(\frac{i(\rho-\rho')L}{2\pi}\right)
w(\rho-\rho'),
\]
and put
\[
\mathcal M(g)
=
g(0)+\int_{\mathbb R}|\alpha|g(\alpha)\,d\alpha.
\]
Theorem~\ref{thm:local-pair} states that, for each fixed
$g\in C_c^\infty(\mathbb R)$ supported in
$[-\lambda,\lambda]$,
\[
\mathcal W_I(g)
=
\frac{HL}{2\pi}\mathcal M(g)
+O_g(H+T^\lambda L^2).
\]

Both $Q$ and $Q''$ are real, even, smooth functions
supported in $[-\lambda,\lambda]$. Summing
\eqref{eq:remove-weight} over all pairs of zeros with
ordinates in $I$ gives 
\[
S_K(I)
=
\mathcal W_I(Q)-\frac1{4L^2}\mathcal W_I(Q'').
\]
We apply Theorem~\ref{thm:local-pair} separately to the
two fixed functions $Q$ and $Q''$.  We obtain
\begin{equation}\label{S_K_I}
	S_K(I)
=\frac{HL}{2\pi}\mathcal M(Q)
-\frac{H}{8\pi L}\mathcal M(Q'')+O_f(H+T^\lambda L^2).
\end{equation}

We first calculate $\mathcal M(Q)$. By the evenness of $f$,
\[
Q(0)
=\int_{\mathbb R}f(u)f(-u)\,du
=\int_{\mathbb R}f(u)^2\,du.
\]
Moreover, Fubini's theorem and the substitution
$v=\alpha-u$ yield
\[
\begin{aligned}
\int_{\mathbb R}|\alpha|Q(\alpha)\,d\alpha
&=\int_{\mathbb R}\int_{\mathbb R}
|\alpha|f(u)f(\alpha-u)\,du\,d\alpha\\
&=\int_{\mathbb R}\int_{\mathbb R}
|u+v|f(u)f(v)\,du\,dv\\
&=\int_{\mathbb R}\int_{\mathbb R}
|u-v|f(u)f(v)\,du\,dv.
\end{aligned}
\]
The last equality follows by replacing $v$ with $-v$
and using $f(-v)=f(v)$. Therefore,
\[
\mathcal M(Q)
=
\int_{\mathbb R}f(u)^2\,du
+\iint_{\mathbb R^2}|u-v|f(u)f(v)\,du\,dv
=C(f).
\]

Since $Q''$ is fixed, we have $\mathcal M(Q'')=O_f(1)$.
More explicitly, we have
\[
\mathcal M(Q'')
=
-\int_{\mathbb R}f'(u)^2\,du
+2\int_{\mathbb R}f(u)^2\,du.
\]

Combining these estimates, by \eqref{S_K_I}
we find
\[
	S_K(I)
=\frac{HL}{2\pi}C(f)+O_f(H+T^\lambda L^2),
\]
which proves \eqref{eq:unweighted} due to $H=T^\theta, 0<\lambda<\theta<1$ and $L=\log T$.
\end{proof}

Apply Proposition~\ref{prop:finite} to the multiset
\begin{equation}\label{eq:scaled-zero-set}
 \cZ=\left\{\frac{i(\rho-1/2)L}{2\pi}:\gamma\in I\right\},
\end{equation}
where $m_z$ is the multiplicity of the corresponding zero $\rho$.
Then
\[
 \sum_{z\in\cZ}1=N(I),\qquad
 \sum_{\substack{z\in\cZ\cap\R\\m_z=1}}1=\Ns(I),\qquad
 \sum_{z,s\in\cZ}K(z-s)^2=S_K(I),
\]
and the number
of distinct elements of $\cZ$ is exactly $\Nd(I)$. Both parts of Proposition~\ref{prop:finite} therefore give
\begin{equation}\label{eq:both-counts}
 \Ns(I)\ge2N(I)-S_K(I),\qquad
 \Nd(I)\ge\frac32N(I)-\frac12S_K(I).
\end{equation}

Using \eqref{eq:intro-rvm} and \eqref{eq:unweighted}, we conclude that
for every fixed $f=\eta^2$ with $\eta$ satisfying \eqref{eq:eta},
\begin{equation}\label{eq:window-bound}
 \liminf_{T\to\infty}\frac{\Ns(T,T^\theta)}{N(T,T^\theta)}
 \ge2-C(f).
\end{equation}
For the same $f$, the distinct-zero inequality in \eqref{eq:both-counts}
gives
\begin{equation}\label{eq:window-bound-distinct}
 \liminf_{T\to\infty}\frac{\Nd(T,T^\theta)}{N(T,T^\theta)}
 \ge\frac32-\frac12C(f).
\end{equation}
Both lower bounds are maximized by minimizing the same functional $C(f)$.

\section{Proof of Theorem~\ref{thm:main}}
\label{sec:optimization}

In the following, we minimize the functional $C(f)$ defined by \eqref{eq:functional}.

\begin{proposition}\label{prop:optimal}
Let $0<\lambda\le1$ and $J=[-\lambda/2,\lambda/2]$.
Among real functions $f\in L^2(J)$ with $\int_J f=1$, the functional
\begin{equation*}
 C(f)=\int_\R f(u)^2\,du
       +\iint_{\R^2}|u-v|f(u)f(v)\,du\,dv.
\end{equation*}
defined by \eqref{eq:functional}, with $f$ extended by zero, has the unique minimizer
\begin{equation}\label{eq:cosine}
 f_\lambda(u)=
 \frac{\cos(\sqrt2u)}
      {\sqrt2\sin(\lambda/\sqrt2)}\ind_J(u).
\end{equation}
Its value is
\begin{equation}\label{eq:C-lambda}
 C_\lambda:=C(f_\lambda)
 =\frac{\lambda}{2}
       +\frac1{\sqrt2}\cot\!\left(\frac{\lambda}{\sqrt2}\right).
\end{equation}
The same infimum is obtained in the smaller class $f=\eta^2$
from \eqref{eq:eta}.
\end{proposition}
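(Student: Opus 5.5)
The plan is to treat $C$ as a quadratic functional on the affine set $\{f\in L^2(J):\int_J f=1\}$. I would show that it is strictly convex there, verify that $f_\lambda$ satisfies the Euler--Lagrange equation, and read off the value from that equation.

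\textbf{Step 1 (positivity of the quadratic part).} Write $f=f_\lambda+h$ with $h\in L^2(J)$ and $\int_J h=0$. The quadratic part of $C$ is $q(h)=\int h^2+\iint|u-v|h(u)h(v)\,du\,dv$. Put $G(u)=\int_{-\infty}^u h$. Since $\int h=0$, $G$ is supported in $J$ and vanishes at both endpoints. One integration by parts in each variable gives
\[
\iint|u-v|h(u)h(v)\,du\,dv=-2\int_J G^2 .
\]
Hence $q(h)=\int_J G'^2-2\int_J G^2$. Wirtinger's inequality for $G\in H^1_0(J)$ gives $\int G'^2\ge(\pi/\lambda)^2\int G^2$. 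Since $(\pi/\lambda)^2\ge\pi^2>2$ for $\lambda\le1$, we get $q(h)>0$ unless $h=0$. This step is where $\lambda\le1$ enters, and I expect it to be the main obstacle: the kernel $|u-v|$ is only conditionally \emph{negative} definite, so convexity is not automatic.

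\textbf{Step 2 (Euler--Lagrange equation).} Set $g(u)=f_\lambda(u)+\int_J|u-v|f_\lambda(v)\,dv$ for $u\in J$. Using $\frac{d^2}{du^2}\int|u-v|f(v)\,dv=2f(u)$, we get $g''=f_\lambda''+2f_\lambda=0$ on $J$, because $f_\lambda$ is a multiple of $\cos(\sqrt2u)$. Since $g$ is even, $g$ is a constant $\mu$ on $J$. Expanding $C(f_\lambda+h)$ then gives
\[
C(f_\lambda+h)=C(f_\lambda)+2\int_J h\,g+q(h)=C(f_\lambda)+2\mu\int_J h+q(h)=C(f_\lambda)+q(h).
\]
By Step 1, $f_\lambda$ is therefore the unique minimizer. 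The normalization of $f_\lambda$ follows from $\int_{-\lambda/2}^{\lambda/2}\cos(\sqrt2u)\,du=\sqrt2\sin(\lambda/\sqrt2)$.

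\textbf{Step 3 (the value).} Because $\int f_\lambda=1$, we have $C(f_\lambda)=\int_J f_\lambda\, g=\mu$. Evaluate $\mu$ at $u=\lambda/2$:
\[
\mu=f_\lambda(\lambda/2)+\int_J(\lambda/2-v)f_\lambda(v)\,dv.
\]
The term $\int_J v f_\lambda(v)\,dv$ vanishes because $f_\lambda$ is even, so the integral equals $\lambda/2$. Also $f_\lambda(\lambda/2)=\cos(\lambda/\sqrt2)/(\sqrt2\sin(\lambda/\sqrt2))=\tfrac{1}{\sqrt2}\cot(\lambda/\sqrt2)$. Together these give \eqref{eq:C-lambda}.

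\textbf{Step 4 (the class $f=\eta^2$).} Since $\lambda/\sqrt2<\pi/2$, $f_\lambda$ is strictly positive on the interior of $J$. First shrink $f_\lambda$ to a slightly smaller interval $(1-\epsilon)J$ by dilation. Then mollify, take the square root of the result, and renormalize to get $\eta_\epsilon\in C_c^\infty((-\lambda/2,\lambda/2))$, real and even, with $\int\eta_\epsilon^2=1$ and $\eta_\epsilon^2\to f_\lambda$ in $L^2(J)$. The square root is smooth because the mollified function is positive on the interior of its support, and at the edges one can use the square of a smooth cutoff. The functional $C$ is continuous in $L^2(J)$, since the double integral is bounded by $\lambda\|f\|_1^2\le\lambda^2\|f\|_2^2$ and the bilinear form is bounded in the same way. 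Hence $C(\eta_\epsilon^2)\to C_\lambda$. Combined with the lower bound from Step 2, this shows that the infimum over the class $f=\eta^2$ is also $C_\lambda$.
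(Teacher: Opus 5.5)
Your proposal is correct and follows essentially the same route as the paper: the Euler--Lagrange function $f_\lambda(u)+\int_J|u-v|f_\lambda(v)\,dv$ is constant, the identity $\iint|u-v|h(u)h(v)\,du\,dv=-2\int_J P^2$ holds with $P(u)=\int_{-\lambda/2}^u h$, the Dirichlet Poincar\'e (Wirtinger) inequality gives $C(f)-C_\lambda\ge(1-2\lambda^2/\pi^2)\|f-f_\lambda\|_2^2$, and an approximation argument handles the class $f=\eta^2$. The only difference is in Step 4, where your dilation and mollification are unnecessary: since $f_\lambda$ is already smooth and strictly positive on $J$, the paper simply takes $\eta_\varepsilon=\chi_\varepsilon\sqrt{f_\lambda}/\sqrt{b_\varepsilon}$ with an even smooth cutoff $\chi_\varepsilon$, and this also avoids your somewhat vague claim about smoothness of the square root near the edges.
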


\begin{proof}
Put $a=\lambda/2$. The function $f_\lambda$ is even and positive
on $J$, and direct integration gives $\int_J f_\lambda=1$.
For $|u|<a$, set
\[
 G(u)=f_\lambda(u)+\int_{-a}^a|u-v|f_\lambda(v)\,dv.
\]
Since $G''=f_\lambda''+2f_\lambda=0$ and $G$ is even,
$G$ is constant. At the right endpoint, taking the interior limit
and using the zero first moment of $f_\lambda$, we obtain
\[
 G=f_\lambda(a^-)+a
   =\frac1{\sqrt2}\cot\!\left(\frac{\lambda}{\sqrt2}\right)+\frac{\lambda}{2}.
\]
Integration against $f_\lambda(u)\,du$ proves \eqref{eq:C-lambda}.

We next prove global minimality and uniqueness.
Let $f\in L^2(J)$ be any real function satisfying
$\int_J f=1$, and put
\[
h=f-f_\lambda.
\]
Then $\int_Jh=0$. Expanding the quadratic functional
and using the symmetry of the kernel gives
\[
\begin{aligned}
C(f)-C(f_\lambda)
={}&2\int_J h(u)
\left(
f_\lambda(u)+\int_J|u-v|f_\lambda(v)\,dv
\right)\,du\\
&+\int_Jh(u)^2\,du
+\iint_{J^2}|u-v|h(u)h(v)\,du\,dv.
\end{aligned}
\]
The expression in parentheses is the constant
$C_\lambda$. Therefore the linear term vanishes, and
\[
C(f)-C_\lambda
=
\int_Jh(u)^2\,du
+\iint_{J^2}|u-v|h(u)h(v)\,du\,dv.
\]

To evaluate the second term, define
\[
P(u)=\int_{-a}^{u}h(v)\,dv.
\]
Since $h\in L^2(J)$, the function $P$ is absolutely
continuous, with $P'=h$ almost everywhere. Moreover,
\[
P(-a)=0,\qquad
P(a)=\int_Jh(v)\,dv=0.
\]
Thus $P\in H_0^1((-a,a))$, the Sobolev space with zero
boundary values.

Set
\[
V_h(u)=\int_J|u-v|h(v)\,dv.
\]
Then
\[
V_h'(u)
=
\int_{-a}^{u}h(v)\,dv
-\int_u^{a}h(v)\,dv.
\]
Since $\int_Jh=0$, this simplifies to
\[
V_h'(u)=2P(u).
\]
Integration by parts, using the vanishing boundary
values of $P$,  yields
\[
\begin{aligned}
\iint_{J^2}|u-v|h(u)h(v)\,du\,dv
&=\int_Jh(u)V_h(u)\,du\\
&=\int_JP'(u)V_h(u)\,du\\
&=[P(u)V_h(u)]_{-a}^{a}
-\int_JP(u)V_h'(u)\,du\\
&=-2\int_JP(u)^2\,du.
\end{aligned}
\]
 We have therefore
proved the following identity
\[
C(f)-C_\lambda
=\int_Jh(u)^2\,du-2\int_JP(u)^2\,du.
\]

The Dirichlet Poincar\'e inequality on an interval of
length $\lambda$ (see \cite[Theorem~257, p.~185]{HLP}) gives
\[
\int_JP(u)^2\,du
\le\frac{\lambda^2}{\pi^2}
\int_JP'(u)^2\,du
=\frac{\lambda^2}{\pi^2}\int_Jh(u)^2\,du.
\]
Consequently,
\[
C(f)-C_\lambda
\ge
\left(1-\frac{2\lambda^2}{\pi^2}\right)
\|f-f_\lambda\|_{L^2(J)}^2.
\]
Since $0<\lambda\le1$, the coefficient is strictly
positive. Thus $C(f)\ge C_\lambda$, with equality only
when $f=f_\lambda$ almost everywhere. 

Finally, we show that the same infimum is obtained in
the smaller class specified in \eqref{eq:eta}. Choose
real even functions
\[
\chi_\varepsilon\in C_c^\infty((-a,a)),
\qquad 0\le\chi_\varepsilon\le1,
\]
such that $\chi_\varepsilon=1$ on
$[-a+\varepsilon,a-\varepsilon]$, where
$0<\varepsilon<a$. Put
\[
b_\varepsilon
=\int_J\chi_\varepsilon(u)^2f_\lambda(u)\,du,
\]
and define
\[
\eta_\varepsilon(u)
=
\frac{\chi_\varepsilon(u)\sqrt{f_\lambda(u)}}
{\sqrt{b_\varepsilon}},
\qquad
f_\varepsilon=\eta_\varepsilon^2.
\]
The square root is smooth on $(-a,a)$ because
$f_\lambda$ is strictly positive there. Hence
$\eta_\varepsilon$, extended by zero, is real, even,
and belongs to $C_c^\infty((-a,a))$. Its normalization
gives
\[
\int_J\eta_\varepsilon(u)^2\,du=1.
\]

By dominated convergence,
\(
b_\varepsilon\rightarrow1,
f_\varepsilon\rightarrow f_\lambda
\text{ in }L^1(J)\text{ and }L^2(J).
\)
In particular,
\[
\int_Jf_\varepsilon(u)^2\,du
\longrightarrow\int_Jf_\lambda(u)^2\,du.
\]
For the double integral, the bound $|u-v|\le\lambda$
on $J^2$ gives
\[
\begin{aligned}
&\left|
\iint_{J^2}|u-v|
\bigl(f_\varepsilon(u)f_\varepsilon(v)
-f_\lambda(u)f_\lambda(v)\bigr)\,du\,dv
\right|\\
&\qquad\le
\lambda\|f_\varepsilon-f_\lambda\|_{L^1(J)}
\left(
\|f_\varepsilon\|_{L^1(J)}
+\|f_\lambda\|_{L^1(J)}
\right)\\
&\qquad=
2\lambda\|f_\varepsilon-f_\lambda\|_{L^1(J)}
\longrightarrow0.
\end{aligned}
\]

 Therefore,
\(
C(f_\varepsilon)\rightarrow C(f_\lambda)=C_\lambda,
\)
as $\varepsilon\rightarrow0$. This
shows that the infimum in the smaller class specified in \eqref{eq:eta} is exactly
$C_\lambda$.
\end{proof}

\Needspace{9\baselineskip}
\begin{proof}[Proof of Theorem~\ref{thm:main}]
Fix $\lambda<\theta$ and a sufficiently small positive $\eps>0$ as in the last
proof. Apply \eqref{eq:window-bound} and \eqref{eq:window-bound-distinct}
to $f_\eps$ and first let
$T\to\infty$, with both parameters fixed. Next let $\eps\rightarrow0$.
We obtain
\[
 \begin{aligned}
 \liminf_{T\to\infty}\frac{\Ns(T,T^\theta)}{N(T,T^\theta)}
 &\ge2-C_\lambda,\\
 \liminf_{T\to\infty}\frac{\Nd(T,T^\theta)}{N(T,T^\theta)}
 &\ge\frac32-\frac12C_\lambda.
 \end{aligned}
\]
Finally let $\lambda\rightarrow\theta$ and use \eqref{eq:C-lambda}.
This proves \eqref{eq:main} and \eqref{eq:main-distinct}.
\end{proof}

\section*{Acknowledgments}
ChatGPT-6 Astra was used to implement the ideas of the proof of Theorem~\ref{thm:main}. The author verifies, corrects and rewrites the proof, and takes responsibility for the content.

\end{document}